\DeclareMathOperator \re {Re}
\DeclareMathOperator \supp {supp}
\newcommand{\Rb}{\mathbb{R}}
\newcommand{\Tb}{\mathbb{T}}
\newcommand{\Zb}{\mathbb{Z}}
\newcommand{\e}{\varepsilon}
\newcommand{\polydamp}{V}
\newtheorem{lem}{Lemma}
\newtheorem*{thm}{Theorem}
\author{Kiril Datchev}
\address{Department of Mathematics, Purdue University, West Lafayette, IN, USA}
\email{kdatchev@purdue.edu}
\author{Perry Kleinhenz}
\address{Department of Mathematics, Northwestern University, Evanston, IL, USA}
\email{pbk@math.northwestern.edu}
\title[Sharp polynomial decay rates]{Sharp polynomial decay rates for the damped wave equation with H\"older-like damping}
\date{\today}
\begin{document}

\begin{abstract}
We study decay rates for the energy of solutions of the damped wave equation on the torus. We consider dampings invariant in one direction and bounded above and below by multiples of $x^{\beta}$ near the boundary of the support and show decay at rate $1/t^{\frac{\beta+2}{\beta+3}}$. In the case where $W$ vanishes exactly like $x^{\beta}$ this result is optimal by \cite{Kleinhenz2019}. The proof uses a version of the Morawetz multiplier method.
\end{abstract}

\maketitle

\section{Introduction}

Let $W$ be a bounded, nonnegative damping function on a compact Riemannian manifold $M$, and let $v$ solve 
\begin{equation}\label{e:veq}\begin{cases}
 \partial_t^2 v + W \partial_t v - \Delta v = 0, \qquad &t>0, \\
(v,\partial_t v) = (v_0,v_1)\in C^\infty(M) \times C^\infty(M), \qquad &t=0.
\end{cases}\end{equation}
We are interested in decay rates as $t \to \infty$ for the energy
\[
 \mathcal E(t) = \int_{M} |\partial_t v(t)|^2 + |\nabla v(t)|^2.
\]
When $W$ is continuous, it is classical that \textit{uniform stabilization}, namely a uniform decay rate $\mathcal E(t) \le C r(t) \mathcal E(0)$ with $r(t) \to 0$ as $t \to \infty$, is equivalent to  \textit{geometric control}, namely the existence of a length $L$ such that all geodesics of length at least $L$ intersect the set where $W>0$. Moreover, in this case the optimal $r(t)$ is exponentially decaying in $t$.

When uniform stabilization fails, we look instead for $r(t)$ such that 
\begin{equation}\label{e:endecgen}
  \mathcal E(t)^{1/2} \le C r(t) \left(\|v_0\|_{H^2(M)} +  \|\partial_t v_1\|_{H^1(M)}  \right).
\end{equation}
Then the optimal $r(t)$ depends on the geometry of $M$ and of the set where $W>0$, and also on the rate of vanishing of  $W$. In this note we explore this dependence in precise detail for translation invariant damping functions on the torus, where we prove decay of the form
\begin{equation}\label{e:endec}
  \mathcal E(t)^{1/2} \le C t^{-\alpha} \left(\|v_0\|_{H^2(M)} +  \|\partial_t v_1\|_{H^1(M)}  \right).
\end{equation}

\begin{thm}
Let $M$ be the torus $(\Rb / 2 \pi  \Zb)_x \times (\Rb / 2 \pi  \Zb)_y$. Let $C_0>0$, $\sigma \in (0,\pi)$, and $\beta \geq 0$ be given. Suppose $W = W(x)$ obeys
\begin{equation}\label{e:wass}
\frac 1 {C_0} \polydamp(x) \le W(x) \le C_0 \polydamp(x), \qquad \polydamp(x) =
\begin{cases}
0, \qquad &|x| \in [0,\sigma],\\
(|x| - \sigma)^\beta, \qquad & |x| \in (\sigma, \pi],
\end{cases}
\end{equation}
for all $x \in [-\pi,\pi]$. Then there is $C$, depending only on $C_0$, $\sigma$, and $\beta$, such that \eqref{e:endec} holds with
\begin{equation}\label{e:ab23}
 \alpha = \frac{\beta+2}{\beta+3}.
\end{equation}

\end{thm}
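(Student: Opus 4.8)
The plan is to derive \eqref{e:endec} from a resolvent estimate. By the standard correspondence between polynomial energy decay for the damped wave semigroup and polynomial resolvent bounds on the imaginary axis (Borichev--Tomilov, together with the reduction of Anantharaman--L\'eautaud tailored to \eqref{e:veq}), the estimate \eqref{e:endec} with $\alpha=\frac{\beta+2}{\beta+3}$ is equivalent to
\begin{equation*}
\bigl\|\bigl(-\Delta-s^{2}+isW\bigr)^{-1}\bigr\|_{L^{2}(M)\to L^{2}(M)}\le C\,|s|^{-\frac{\beta+1}{\beta+2}},\qquad |s|\ge 1,
\end{equation*}
so it suffices to prove this. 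Complex conjugation intertwines the operators at $s$ and $-s$, so I may take $s>0$; and since $W\ge 0$, $W\not\equiv 0$ rules out real resonances (by a unique continuation argument), the resolvent is continuous and finite on any compact subset of $\{s>0\}$, so I may also assume $s$ is large.

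Since $W=W(x)$, separate variables in $y$: writing $u=\sum_{n\in\Zb}u_{n}(x)e^{iny}$ and likewise for $f$, the equation decouples into the family of problems $-u_{n}''+isW(x)u_{n}-z_{n}u_{n}=f_{n}$ on $\Rb/2\pi\Zb$ with $z_{n}=s^{2}-n^{2}$, and by Plancherel it suffices to bound $\|u_{n}\|_{L^{2}}\le Cs^{-\nu}\|f_{n}\|_{L^{2}}$, $\nu=\frac{\beta+1}{\beta+2}$, uniformly in $n$. Dropping subscripts, I must show that $-u''+isWu-zu=f$ on the circle, $z\le s^{2}$, forces $\|u\|_{L^{2}}\le Cs^{-\nu}\|f\|_{L^{2}}$. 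Pairing with $u$ yields the two basic identities $s\int W|u|^{2}=\im\int f\overline{u}$ and $\int|u'|^{2}=z\int|u|^{2}+\re\int f\overline{u}$; these settle the coercive range $z\le -s^{\nu}$ at once ($\|u\|\le|z|^{-1}\|f\|$). The remaining range $-s^{\nu}<z\le s^{2}$ is the heart of the matter, and here one must use the behavior of $W$ near $|x|=\sigma$ on the critical length scale $\ell=s^{-1/(\beta+2)}$, on which $-\partial_{x}^{2}$ and $isW(x)\sim is(|x|-\sigma)^{\beta}$ balance.

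For this range I would run a Morawetz-type multiplier argument, refining the approach used in earlier works on such dampings. Multiplying the equation by $\overline{\chi(x)u'}$ (with a zeroth-order correction $\overline{\psi(x)u}$), integrating over the circle, and taking real parts produces identities of the shape
\begin{equation*}
\tfrac12\int\chi'\bigl(|u'|^{2}+z|u|^{2}\bigr)=s\,\im\!\int W\chi\,u\overline{u'}+\re\!\int f\chi\overline{u'},
\end{equation*}
together with its $\psi$-companion. One chooses $\chi'\ge 1$ (and $\psi\sim 1$) on the undamped arc $(-\sigma,\sigma)$, with the negative part of $\chi'$ --- forced by periodicity --- supported in $\{W>0\}$ and with $\chi$ transitioning across $x=\pm\sigma$ over the scale $\ell$, so that $\|\chi'\|_{\infty}\sim\ell^{-1}$ there. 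Combining the identities so that the left side controls $\int_{-\sigma}^{\sigma}(|u'|^{2}+z|u|^{2})$ --- which by the real-part identity is $\|u'\|^{2}+z_{+}\|u\|^{2}$ up to $O(\|f\|\|u\|)$ --- while the right side is controlled via $s\int W|u|^{2}\le\|f\|\|u\|$, $\int W|u'|^{2}\lesssim\|u'\|^{2}$ (using $W$ bounded), and Cauchy--Schwarz trading powers of $\ell$ against powers of $s$, one absorbs the $\|u'\|^{2}$-terms and, after optimizing, arrives at the desired $\|u\|\le Cs^{-\nu}\|f\|$. The profile of $\chi$ (and the precise split) must be adjusted among the sub-regimes $|z|\lesssim 1$, $1\ll z\ll s^{2}$, and $z\sim s^{2}$, i.e.\ among undamped oscillation frequencies ranging from $O(1)$ to $O(s)$.

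The main obstacle is exactly this multiplier step: the transition profile of $\chi$ on the scale $\ell=s^{-1/(\beta+2)}$ must be engineered so that every error term generated near $|x|=\sigma$ --- these carry competing powers of $\ell$ and of $\|\chi'\|_{\infty}\sim s^{1/(\beta+2)}$ --- is genuinely absorbed, and so that the outcome is the exact exponent $\nu=\frac{\beta+1}{\beta+2}$, uniformly over $-s^{\nu}<z\le s^{2}$. This is the step where both bounds in \eqref{e:wass} are used essentially --- the lower bound to make the damping usable away from $|x|=\sigma$, the upper bound to control it inside the transition zone --- and where the exact value $\frac{\beta+2}{\beta+3}$ of the decay rate is produced. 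By contrast, the passage to the resolvent estimate, the separation of variables, and the coercive and high-frequency regimes are comparatively routine.
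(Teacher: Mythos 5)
Your reduction to a stationary estimate is stated with the wrong exponent, and the bound you propose to prove is in fact false. The Borichev--Tomilov/Anantharaman--L\'eautaud correspondence turns energy decay at rate $t^{-\alpha}$ for $H^2\times H^1$ data into a resolvent bound $\|(-\Delta+isW-s^2)^{-1}\|_{L^2\to L^2}=O(|s|^{1/\alpha-1})$; with $\alpha=\frac{\beta+2}{\beta+3}$ this is the \emph{growing} bound $O(|s|^{1/(\beta+2)})$, exactly \eqref{e:stationary}, not the decaying bound $O(|s|^{-(\beta+1)/(\beta+2)})$ you write. A decaying bound of that strength would give energy decay like $t^{-(\beta+2)}$, which is impossible here: the geodesics in the undamped strip $|x|<\sigma$ never meet $\supp W$, so geometric control fails and no rate better than $1/t$ can hold; moreover, when $W=\polydamp$ near $|x|=\sigma$ the quasimodes of \cite{Kleinhenz2019} force the resolvent to grow at least like $s^{1/(\beta+2)}$. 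Hence your fiberwise target $\|u\|\le Cs^{-\nu}\|f\|$ (and the ``coercive range'' discussion calibrated to it) aims at an unprovable estimate; the correct fiberwise goal is \eqref{e:2b20}, refined as \eqref{e:esmall} and \eqref{e:2b2}.

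Beyond the exponent, the core analytic step is not carried out. You rightly identify a Morawetz-type multiplier with transition scale $s^{-1/(\beta+2)}$ as the crux, but the sketch ``trade powers of $\ell$ against powers of $s$, absorb, optimize'' is precisely where the work lies, and a single multiplier identity does not close: the damping produces a term of the form $q\int W|uu'|$ that is not controlled by $\int W|u|^2$ and $\|u'\|^2$ with admissible powers of $q$. In the paper this is handled by combining the basic identity $q\int W|u|^2\le\int|fu|$, the piecewise-linear multiplier of Lemma \ref{muest} with slope $q^\delta$ on a layer of width $q^{-\delta}$ ($\delta=1/(\beta+2)$), a further integration by parts converting $\int \polydamp\chi|u'|^2$ into data terms (Lemma \ref{fuwfu}), and then a finite iteration over intermediate scales $q^{-\eta_j}$, decomposing $f$ according to its distance from $|x|=\sigma$, with the number of steps chosen depending on $\beta$; it is this bootstrap, not a one-shot choice of $\chi$, that produces the exponent $1/(\beta+2)$ uniformly in $E\le q^2$. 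None of these ingredients (or substitutes for them) appears in your outline, so the decisive absorption step remains unproved.
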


\subsection*{Remarks}

\begin{enumerate}
\item Our result is especially interesting when $W=\polydamp$ near the set where $|x| = \sigma$. Then, by Theorem 1.1 of \cite{Kleinhenz2019}, the value of $\alpha$ in \eqref{e:ab23} is the best possible. More specifically, in this setting the second author proves that \eqref{e:endec} is false for any $\alpha > \frac{\beta+2}{\beta+3}$ by  constructing a suitable sequence of quasimodes of the stationary operator $-\Delta + i qW - q^2$.

\item As is clear from the reduction to \eqref{e:2b20} at the beginning of the proof below, the same proof gives the same result (with the same constant $C$) if the torus $\Tb =  (\Rb / 2 \pi \Zb)_x \times (\Rb / 2 \pi \Zb)_y$ is replaced by another product $(\Rb / 2 \pi \Zb)_x \times \Sigma_y$, where $\Sigma$ is any compact Riemannian manifold.


\end{enumerate}

The equivalence of uniform stabilization and geometric control for continuous damping functions was proved by Ralston \cite{Ralston1969}, Rauch and Taylor \cite{RauchTaylor1975} (see also \cite{BardosLebeauRauch1992} and \cite{BurqGerard1997}, where $M$ is also allowed to have a boundary). For some more recent finer results concerning discontinuous damping functions, see Burq and G\'erard \cite{BurqGerard2018}.

Decay rates of the form \eqref{e:endecgen} go back to Lebeau \cite{Lebeau1996}. If we assume only that $W \in C(M)$ is nonnegative and not identically $0$, then the best general result is that $r(t)$ in \eqref{e:endecgen} is $1/\log(2+t)$ \cite{Burq1998} and this is optimal on spheres and some other surfaces of revolution \cite{Lebeau1996}.  At the other extreme, if $M$ is a negatively curved (or Anosov) surface, $W \in C^\infty(M)$, $W \ge 0$, $W \not \equiv 0$, then $r(t)$ may be chosen exponentially decaying \cite{DyatlovJinNonnenmacher}. 

When $M$ is a torus, these extremes are avoided and the best bounds are polynomially decaying as in \eqref{e:endec}. Anantharaman and L\'eautaud \cite{AnantharamanLeautaud2014} show \eqref{e:endec} holds with $\alpha =1/2$ when $W \in L^\infty$, $W \ge 0$, $W>0$ on some open set, as a consequence of Schr\"odinger observability/control  \cite{Jaffard1990, Macia2010, BurqZworski2012}. The more recent result of Burq and Zworski \cite{BurqZworski2019} weakens the requirement that  $W>0$ on some open set to merely $W \not \equiv 0$. Anantharaman and L\'eautaud \cite{AnantharamanLeautaud2014} further show that if $\supp W$ does not satisfy the geometric control condition then \eqref{e:endec} cannot hold for any $\alpha>1$. They also show if $W$ satisfies $|\nabla W| \leq W^{1-\e}$ for $\e>0$ small enough and $W \in W^{k_0, \infty}$ for some $k_0$ then $\eqref{e:endec}$ holds with $\alpha =1/(1+4\e)$. 
For earlier work on the square and partially rectangular domains see \cite{LiuRao2005} and  \cite{BurqHitrik2007} respectively, and for  polynomial decay rates in the setting of a degenerately hyperbolic undamped set, see \cite{csvw}. 

In \cite{Kleinhenz2019}, the second author shows that, if $W=\polydamp$ near $\sigma$, then \eqref{e:endec} holds with  $\alpha=(\beta+2)/(\beta+4)$. In the case of constant damping on a strip ($W = \polydamp$ and $\beta = 0$) the result that \eqref{e:endec} holds with  $\alpha = 2/3$ is due to Stahn \cite{Stahn2017}, and the result that it does not hold for $\alpha>2/3$ is due to Nonnenmacher \cite{AnantharamanLeautaud2014}.

Our result holds for $\sigma \in (0,\pi)$, but one can also look at the behavior as $\sigma$ approaches the endpoints of the interval. As  $\sigma \to \pi$, the constants in our estimates blow up. This makes sense because the problem becomes undamped, and no decay is possible in the limit (\eqref{e:endec} holds only with $\alpha = 0$). More interesting is to let $\sigma \to 0$. In that case the constants in our estimates remain bounded, but better results are known by other methods.

In \cite{LeautaudLerner2017},  L\'eautaud and Lerner show that if $\sigma = 0$, then \eqref{e:endec} holds with $\alpha = (\beta+2)/\beta$. They also consider more general manifolds and damping functions. Note that, intriguingly, the decay rate decreases as $\beta$ increases when $\sigma = 0$, while  the decay rate increases as $\beta$ increases when $\sigma \in (0,\pi)$. A key difference in the geometry is that, when $\sigma = 0$ the support of $W$ is the whole torus (so all geodesics interesect it) whereas when $\sigma>0$ there is a one parameter family of geodesics in $(-\sigma, \sigma)$ which do not intersect the support of $W$.

Our result may be interpreted microlocally in the following way. The decay rates in \eqref{e:endecgen} and \eqref{e:endec} are related to time averages of $W$ along geodesics \cite{Nonnenmacher2011}. When $W$ has conormal singularities, as in the case that $W = \polydamp$ near the set where $|x| = \sigma$, one must consider both transmitted and reflected geodesics. In our setting reflected geodesics originating in the undamped region remain undamped, which slows decay. Stronger singularities in $W$ correspond to more reflection \cite{DeHoopVasyUhlmann2015, GannotWunsch2018}, so we expect smaller values of $\beta$ to lead to slower decay. (See also \cite{dkk, gw} for examples of such phenomena for scattering resonances) By contrast, in the setting of \cite{LeautaudLerner2017}, where $\sigma =0$, reflected and transmitted geodesics are both equally damped. In that case, smaller values of $\beta$ correspond to larger  time averages of $W$ along geodesics  just because $W$ is then larger, so we expect faster decay. In terms of our estimates below, the effect of geodesics which remain undamped for a long time is reflected in the fact  that  our bounds are weakest for angular momentum modes close to the undamped (vertical) ones; in the notation of Section \ref{s:proof}, this corresponds to $E$ positive but not too large.

\section{Proof of Theorem}\label{s:proof}

By a Fourier transform in time, we may study the associated stationary problem. More precisely,
by Theorem 2.4 of \cite{BorichevTomilov2010}, as formulated in Proposition 2.4 of \cite{AnantharamanLeautaud2014}, the decay \eqref{e:endec} with $\alpha$ given by \eqref{e:ab23} follows from showing that that there are constants $C$ and $q_0$ such that, for any $q \geq q_0$,
\begin{equation}\label{e:stationary}
\| (-\Delta + i q W - q^2)^{-1}\|_{L^2(\Tb) \to L^2(\Tb)} \le C q^{1/(\beta+2)}.
\end{equation}
Expanding in a Fourier series in the $y$ variable  we see that it is enough to show that there are $C$ and $q_0$ such that for any $f \in L^2(\Rb / 2 \pi  \Zb)$, any real $E \leq q^2$ and any $q \geq q_0$, if $u \in H^2( \Rb / 2 \pi \Zb)$ solves
\begin{equation}\label{e:ueq}
 -u'' + i q W u - Eu = f,
\end{equation}
then
\begin{equation}\label{e:2b20}
\int|u|^2 \le C q^{2/(\beta + 2)} \int |f|^2.
\end{equation}
Here, and below, all integrals are over  $\Rb / 2 \pi \Zb$. We will actually obtain a more precise dependence on $E$, namely we will show that there is $E_0>0$ such that
\begin{equation}\label{e:esmall}
\int|u|^2 \le C \int |f|^2, \quad  \text{ when } E \le E_0,
\end{equation}
and
\begin{equation}\label{e:2b2}
\int|u|^2 \le C  E^{-1} q^{2/(\beta + 2)} \int |f|^2, \quad  \text{ when }E \ge E_0.
\end{equation}
The second of these, \eqref{e:2b2}, is our main estimate.

In our proofs we use a version of the Morawetz multiplier method, which we arrange using the energy functional
\begin{equation}\label{e:fdef}
F(x) = |u'(x)|^2 + E|u(x)|^2.
\end{equation}
This method was introduced to prove wave decay for star-shaped obstacle scattering \cite{Morawetz}, and our approach is inspired by that of \cite{CardosoVodev}, as adapted to cylindrical geometry in \cite{ChristiansenDatchev}.

We begin with some easier and essentially well-known estimates. We will often use the elementary fact that if $a,\, b,\, c,\, d,\, e \ge 0$ and $\theta \in [0,1]$, then
\begin{equation}\label{e:elem}
a + b \le c b^{1-\theta} d^{\theta} + e \Longrightarrow a + \theta b \le \theta c^{1/\theta} d + e. 
\end{equation}

\begin{lem}
For any $E \in \Rb, q>0$ and $u,f$ solving \eqref{e:ueq} we have
\begin{equation}\label{e:wu}
 \int W |u|^2 \le q^{-1} \int |fu|.
\end{equation}
Also, for any $\psi  \in C^\infty(\Rb / 2 \pi \Zb)$ which vanishes near $[-\sigma,\sigma]$, there is $C>0$ such that for any $q>0, E \in \Rb$ and $u,f$ solving \eqref{e:ueq} we have
 \begin{equation}\label{e:u'eps}
  \int \psi |u'|^2 \le C (1 + \max(0,E) q^{-1}) \int |fu|.
 \end{equation}
Finally, there are positive constants $E_0$ and $C$ such that for any $q>0, E \le E_0$, and $u,f$ solving \eqref{e:ueq} we have \eqref{e:esmall}.
\end{lem}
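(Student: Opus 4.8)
The plan is to obtain all three estimates by pairing \eqref{e:ueq} with a multiplier and integrating over $\Rb/2\pi\Zb$, where the absence of boundary makes every integration by parts free of boundary terms; the recurring mechanism is that each cutoff that appears is supported away from $[-\sigma,\sigma]$, where \eqref{e:wass} forces $W\ge c>0$, so that \eqref{e:wu} reads as $L^2$-control of $u$ on that region. For \eqref{e:wu} itself, I would multiply \eqref{e:ueq} by $\bar u$ and integrate by parts once to reach $\int|u'|^2+iq\int W|u|^2-E\int|u|^2=\int f\bar u$; since $E$ is real, the imaginary part is exactly $q\int W|u|^2=\im\int f\bar u\le\int|fu|$.

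For \eqref{e:u'eps}, note first that since $\psi$ vanishes on a neighborhood of $[-\sigma,\sigma]$ there is $\delta>0$ with $\supp\psi\subset\{|x|\ge\sigma+\delta\}$, and there $W\ge C_0^{-1}\delta^{\beta}=:c>0$ by \eqref{e:wass}. I would multiply \eqref{e:ueq} by $\psi\bar u$, integrate by parts, and take the real part: the term $iq\int W\psi|u|^2$ is purely imaginary and contributes nothing, leaving $\int\psi|u'|^2=-\re\int\psi'u'\bar u+E\int\psi|u|^2+\re\int f\psi\bar u$. Using $\re(u'\bar u)=\tfrac12(|u|^2)'$ and integrating by parts once more rewrites $-\re\int\psi'u'\bar u$ as $\tfrac12\int\psi''|u|^2$. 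Since $\psi$ and $\psi''$ are both supported in $\{|x|\ge\sigma+\delta\}$, each of $\int\psi|u|^2$ and $\int\psi''|u|^2$ is $\le c^{-1}\int W|u|^2\le c^{-1}q^{-1}\int|fu|$ by \eqref{e:wu}; combining this with $E\int\psi|u|^2\le\max(0,E)\int\psi|u|^2$ and $\re\int f\psi\bar u\le\|\psi\|_\infty\int|fu|$ produces \eqref{e:u'eps}.

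For \eqref{e:esmall}, the real part of the identity from the first paragraph is $\int|u'|^2=E\int|u|^2+\re\int f\bar u\le E\int|u|^2+\int|fu|$. To pass from this to a bound on $\int|u|^2$ I would use a Poincar\'e-type inequality on the circle: for a fixed interval $J\subset(\sigma,\pi)$ the fundamental theorem of calculus gives $\int|u|^2\le C_P\bigl(\int_J|u|^2+\int|u'|^2\bigr)$ with $C_P$ depending only on $|J|$, while $W\ge c_J>0$ on $J$ gives $\int_J|u|^2\le c_J^{-1}\int W|u|^2\le c_J^{-1}q^{-1}\int|fu|$ by \eqref{e:wu}. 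Substituting the bound for $\int|u'|^2$ leaves $\int|u|^2\le C_PE\int|u|^2+C_P(c_J^{-1}q^{-1}+1)\int|fu|$, and choosing $E_0=(2C_P)^{-1}$ — so that $C_PE\le\tfrac12$ whenever $E\le E_0$ — lets the first term on the right be absorbed, after which $\int|fu|\le(\int|f|^2)^{1/2}(\int|u|^2)^{1/2}$ closes the estimate. None of these is a genuine obstacle — they are the advertised "easier estimates" — and the only steps that need attention are the bookkeeping that keeps every term supported where $W$ is bounded below, so that \eqref{e:wu} applies, and the quantitative choice of $E_0$ against the Poincar\'e constant; the $q^{-1}$ factors above are immaterial once $q$ is bounded below.
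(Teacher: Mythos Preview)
Your arguments for \eqref{e:wu} and \eqref{e:u'eps} are exactly the paper's: multiply \eqref{e:ueq} by $\bar u$ (respectively $\psi\bar u$), integrate by parts, and read off the imaginary (respectively real) part, using that $|\psi|+|\psi''|\lesssim W$ on $\supp\psi$ together with \eqref{e:wu}.

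For \eqref{e:esmall} you take a slightly different route. The paper multiplies \eqref{e:ueq} by $b\bar u$ for a fixed positive $b\in C^\infty(\Rb/2\pi\Zb)$ chosen with $b''<0$ on a neighborhood of $[-\sigma,\sigma]$, obtaining
\[
\int b|u'|^2+\int\Bigl(-\tfrac12 b''-Eb\Bigr)|u|^2=\re\int b f\bar u;
\]
the concavity of $b$ makes $-\tfrac12 b''$ a positive weight on the undamped region, and adding a large multiple of \eqref{e:wu} handles the complementary region, yielding $\int(|u'|^2+|u|^2)\lesssim\int|fu|$ directly for $E\le E_0$. Your version instead combines the unweighted energy identity with a Poincar\'e inequality anchored on an interval $J\subset(\sigma,\pi)$ where $W$ is bounded below. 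Both approaches are correct and rest on the same mechanism---control of $u$ on the undamped strip via $u'$ plus $u$ on the damped strip---so the difference is cosmetic; the paper's multiplier packages the Poincar\'e step into the choice of $b$. Your remark that the stray $q^{-1}$ factors are harmless once $q\ge q_0$ matches the paper's implicit use of \eqref{e:wu} on the right-hand side.
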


\begin{proof}

To prove \eqref{e:wu} we multiply \eqref{e:ueq} by $\bar u $ and take the imaginary part, integrating by parts to see that the first term is real.

To prove \eqref{e:u'eps}, we integrate by parts twice and use \eqref{e:ueq} to write
\begin{equation}\label{e:ibp2}
  \int \psi|u'|^2  = - \re \int \psi' u' \bar u - \re \int \psi u'' \bar u = \frac 12 \int \psi'' |u|^2 + E \int \psi |u|^2 + \re \int \psi f \bar u.
\end{equation}
Now use $|\psi''| + |\psi| \le C W$ and \eqref{e:wu} to conclude.

To prove \eqref{e:esmall}, we multiply \eqref{e:ueq} by $\bar u $ and by a positive function $b \in C^\infty (\Rb / 2 \pi  \Zb)$ to be determined later, integrate, and take the real part to obtain
\[
- \re \int b u'' \bar u - E \int b |u|^2 = \re \int b f \bar u.
\]
Integrating by parts twice (as in \eqref{e:ibp2}), gives
\[
 \int b |u'|^2 + \int \left(-\frac 12b'' - Eb\right)  |u|^2 = \re \int b f \bar u.
\]
Now choose $b$ such that $b''<0$ near $[-\sigma,\sigma]$. Then, as long as $E \leq E_0$ for some $E_0$ sufficiently small, adding a multiple of \eqref{e:wu} gives
\[
 \int  \left( |u'|^2 + |u|^2 \right) \lesssim \int |fu| \le \left(\int|f|^2\right)^{1/2}\left(\int|u|^2\right)^{1/2},
\]
which implies \eqref{e:esmall} by \eqref{e:elem}.
\end{proof}

It remains to show \eqref{e:2b2}. We proceed by proving two lemmas:
\begin{lem}\label{muest}
Let $\delta>0$ be given, and let
\[
 \mu = \mu(x) = \begin{cases}
q^\delta, \qquad  &|x| \in [\sigma, \sigma + q^{-\delta}], \\
1, \qquad &|x| \in [0,\sigma) \cup (\sigma + q^{-\delta}, \pi].
 \end{cases}
\]
Then we have
\begin{equation}
 \int \mu |u'|^2 + E \mu |u|^2 \lesssim \int |f|^2 + q \int W|uu'|.
\end{equation}
\end{lem}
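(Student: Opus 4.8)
The plan is to run the Morawetz multiplier method with the energy functional $F$ from \eqref{e:fdef} and a weight $\phi$ manufactured from $\mu$. Since this lemma is a tool for proving \eqref{e:2b2}, I assume $E\ge E_0$ throughout, so in particular $F\ge 0$. The starting identity comes from multiplying \eqref{e:ueq} by $2\phi\bar u'$, taking real parts, substituting $u''=iqWu-Eu-f$, and integrating in $x$: a routine integration by parts (the identities persist for Lipschitz $\phi$ by approximation) gives $(\phi F)'=\phi'F+2q\phi W\re(iu\bar u')-2\phi\re(f\bar u')$, so integrating over $\Rb/2\pi\Zb$ and using $\int(\phi F)'=0$ by periodicity yields
\[
\int \phi' F = -2q\int \phi W\re(iu\bar u') + 2\int \phi\re(f\bar u').
\]

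Next I would choose $\phi$. Fix once and for all an interval $I\subset(\sigma,\pi)$ bounded away from the endpoints, so that $W\ge c_0$ on $I$ for a constant $c_0=c_0(C_0,\sigma,\beta)>0$, and let $\phi$ be the Lipschitz function on $\Rb/2\pi\Zb$ with $\phi'=\mu$ off $I$ and $\phi'=-L$ on $I$, where $L$ is the unique constant making $\int\phi'=0$. Because $\int(\mu-1)\le 2$ and $|I|$ is fixed, $L$ is bounded by a constant depending only on $\sigma$; and because $\mu$ raises $\phi$ by only $q^{\delta}$ over intervals of length $q^{-\delta}$, one can normalize $\phi$ so that $\|\phi\|_\infty$ is bounded by an absolute constant, uniformly in $q$ and $\delta$. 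For this $\phi$ we have $\int\mu F=\int\phi'F+(1+L)\int_I F$, so the identity above gives
\[
\int \mu F \;\lesssim\; q\int W|uu'| \;+\; \int|f||u'| \;+\; \int_I |u'|^2 \;+\; E\int_I |u|^2,
\]
with implied constant depending only on $C_0,\sigma,\beta$.

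It remains to absorb the last three terms. Each is at most $(\int|f|^2)^{1/2}(\int\mu F)^{1/2}$ up to a constant depending on $C_0,\sigma,\beta$: for $\int|f||u'|$ this is Cauchy--Schwarz together with $\int|u'|^2\le\int\mu F$ (using $\mu\ge 1$, $E\ge 0$); for $\int_I|u'|^2$ one dominates by $\int\psi|u'|^2$ for a fixed cutoff $\psi$ that is $1$ on $I$ and vanishes near $[-\sigma,\sigma]$, applies \eqref{e:u'eps} and Cauchy--Schwarz, and uses $\int|u|^2\le E^{-1}\int\mu F$ with $(1+Eq^{-1})E^{-1/2}\le E_0^{-1/2}+1$ (valid since $E_0\le E\le q^2$); for $E\int_I|u|^2$ one uses $W\ge c_0$ on $I$, then \eqref{e:wu}, Cauchy--Schwarz, $\int|u|^2\le E^{-1}\int\mu F$, and $E^{1/2}q^{-1}\le 1$. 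Substituting, $\int\mu F\lesssim q\int W|uu'|+(\int|f|^2)^{1/2}(\int\mu F)^{1/2}$, and \eqref{e:elem} (or Young's inequality) absorbs a small multiple of $\int\mu F$ into the left side, yielding the lemma.

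The step I expect to be the real obstacle is the construction of $\phi$: it must carry $\phi'=\mu$ on the region of interest, be periodic, and keep both $\|\phi\|_\infty$ and the compensating slope $L$ bounded as $q\to\infty$ — which forces the correction to sit on a fixed interval, and that interval must moreover lie where $W$ is bounded below so that \eqref{e:u'eps} and \eqref{e:wu} turn $\int_I F$ into the right-hand side with constants independent of $q$. Keeping every constant independent of $q$, with $E\le q^2$ entering at the two places indicated, is the point that needs care; the remaining manipulations are routine.
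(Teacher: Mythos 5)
Your proof is correct and essentially the same as the paper's: there too one integrates $(bF)'$ for a periodic, continuous piecewise-linear multiplier $b$ whose slope equals $\mu$ except on a fixed portion of the damped region (the paper uses slope $-M$ on $|x|\in(\tau,\pi)$ rather than your interval $I$), and then absorbs the error terms via \eqref{e:wu}, \eqref{e:u'eps}, Cauchy--Schwarz and \eqref{e:elem}. Your explicit use of $E_0\le E\le q^2$ (left implicit in the paper's statement) is consistent with how the lemma is actually applied in the proof of \eqref{e:2b2}.
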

\begin{lem}\label{fuwfu} Let $\delta= \frac{1}{\beta+2}$ and  let
\[
\chi(x)= 
\begin{cases}
	0, \qquad &|x| \in [0, \sigma], \\
	q^\delta(|x| - \sigma), \qquad & |x| \in [\sigma,\sigma+q^{-\delta}], \\
	1, \qquad & |x| \in [\sigma+q^{-\delta},\pi].
\end{cases}
\]
Then we have
\begin{equation}
  \int \mu |u'|^2 + E \mu |u|^2 \lesssim (1+E^{-1}q^{2\delta}) \int |f|^2 +  q^{1/2} \left(\int |fu|\right)^{1/2} \left( \int |W \chi fu|\right)^{1/2}.
\end{equation}
\end{lem}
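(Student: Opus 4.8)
The idea is to promote Lemma \ref{muest} to the present lemma by estimating the error term $q\int W|uu'|$. Granting Lemma \ref{muest},
\[
\int \mu |u'|^2 + E \mu |u|^2 \lesssim \int |f|^2 + q\int W|uu'| ,
\]
so it suffices to bound $q\int W|uu'|$ by the right-hand side of the present lemma together with a small multiple of $\int\mu|u'|^2+E\int\mu|u|^2$, the latter then absorbed via \eqref{e:elem}. First I would use Cauchy--Schwarz and \eqref{e:wu}:
\[
q\int W|uu'| \le q\Big(\int W|u|^2\Big)^{1/2}\Big(\int W|u'|^2\Big)^{1/2} \le q^{1/2}\Big(\int |fu|\Big)^{1/2}\Big(\int W|u'|^2\Big)^{1/2} ,
\]
so the whole problem reduces to an estimate of the form $\int W|u'|^2 \lesssim \int|W\chi fu| + (\text{remainder})$, with a remainder small enough that, put back through the displayed inequality together with a Young inequality, it is swallowed by $(1+E^{-1}q^{2\delta})\int|f|^2$ plus a small multiple of the left-hand side of the lemma.

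The bound for $\int W|u'|^2$ is obtained region by region. Split $\Rb/2\pi\Zb$ into $\{|x|\le\sigma\}$, where $W\equiv 0$; the transition layer $\{\sigma<|x|<\sigma+q^{-\delta}\}$, where $\mu=q^\delta$ and $|x|-\sigma\le q^{-\delta}$ forces $W\le C_0 q^{-\delta\beta}$; a collar $\{\sigma+q^{-\delta}\le|x|\le\sigma+\e_1\}$ with $\e_1$ small and independent of $q$; and the far region $\{|x|\ge\sigma+\e_1\}$, on which $W$ is bounded below and, since $q^{-\delta}<\e_1$ for $q$ large, $\chi\equiv1$, so that $W=W\chi$ there. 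On the far region I would test \eqref{e:ueq} against $g\bar u$, where $g$ is a $q$-independent, smooth, nonnegative function comparable to $W$ on that set and vanishing near $|x|=\sigma$; such a $g$ exists because $(|x|-\sigma)^\beta$ --- to which $W$ is comparable --- is smooth for $|x|>\sigma$. Two integrations by parts (taking the real part) give $\int g|u'|^2=\re\int g f\bar u-\re\int g'u'\bar u+E\int g|u|^2$, in which the non-differentiable $W'$ never occurs, and a further integration by parts converts $\re\int g'u'\bar u$ into $-\tfrac12\int g''|u|^2$, a quantity supported in $\{W>0\}$ and hence $\lesssim q^{-1}\int|fu|$ by \eqref{e:wu}; meanwhile $\re\int g f\bar u\le\int g|fu|\lesssim\int W\chi|fu|=\int|W\chi fu|$ and $E\int g|u|^2\lesssim Eq^{-1}\int|fu|$, again by \eqref{e:wu}, and all of these feed back acceptably through the displayed inequalities. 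On the transition layer and the collar $W$ is either as small as $q^{-\delta\beta}$ (where $\mu=q^\delta$) or at most $\e_1^\beta$ (where $\mu=1$), so --- tracking powers of $q$, which is where the matching $\delta=1/(\beta+2)$ is used --- those contributions are controlled by the left-hand side of the lemma at the price of the factor $1+E^{-1}q^{2\delta}$ appearing on the right. Reassembling the regions, substituting back, and using \eqref{e:elem} completes the argument.

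The main obstacle is precisely the analysis near $|x|=\sigma$. Away from $\pm\sigma$ the clean estimate \eqref{e:u'eps} is available, but on the collar $W$ is small, merely H\"older --- hence not usable for integration by parts --- and exactly balanced against the weight $\mu$; making the powers of $q$ coming from the transition layer (of width $q^{-\delta}$, on which $W\lesssim q^{-\delta\beta}$ while $\mu=q^\delta$) line up with those from the smooth-weight testing on $\{|x|\ge\sigma+\e_1\}$ is what forces the value $\delta=1/(\beta+2)$ and produces the $1+E^{-1}q^{2\delta}$ factor rather than something larger. It is also essential to keep the cross term in the structured form $q^{1/2}(\int|fu|)^{1/2}(\int|W\chi fu|)^{1/2}$: the spare factor of $W\chi$ will afterwards be traded, via \eqref{e:wu}, for a second factor of $\int|fu|$ when Lemma \ref{muest} and the present lemma are combined to deduce \eqref{e:2b2}, and a cruder bound on $W\chi$ would destroy exactly that gain.
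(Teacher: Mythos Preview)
Your overall strategy matches the paper's --- start from Lemma~\ref{muest}, apply Cauchy--Schwarz and \eqref{e:wu} to reduce to estimating $\int W|u'|^2$, then split according to proximity to $|x|=\sigma$ --- but the treatment of your ``collar'' region $\{\sigma+q^{-\delta}\le |x|\le \sigma+\e_1\}$ does not close. There $\mu=1$ and your only bound is $W\le C_0\e_1^\beta$, a constant in $q$. Feeding this back through
\[
q\!\int W|uu'| \le q^{1/2}\Big(\int|fu|\Big)^{1/2}\Big(\int W|u'|^2\Big)^{1/2}
\]
and then absorbing via \eqref{e:elem} produces, after two Young inequalities, a term of size $E^{-1}q^{2}\e_1^{2\beta}\int|f|^2$ rather than $E^{-1}q^{2\delta}\int|f|^2$. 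Since $\e_1$ is fixed and $2\delta=2/(\beta+2)<2$, this is too large for every $\beta>0$. Invoking \eqref{e:u'eps} does not help either: the cutoff $\psi$ there is $q$-independent, so it can only cover $|x|\ge\sigma+\e_0$ for some fixed $\e_0>0$, and even on that set one gets $\int\psi|u'|^2\lesssim(1+Eq^{-1})\int|fu|$, which again yields at best $E^{-1}q\int|f|^2$.

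The paper avoids this by \emph{not} introducing a $q$-independent intermediate scale. It splits $\int V|u'|^2$ only as $\int V(1-\chi)|u'|^2+\int V\chi|u'|^2$, with the same $q$-dependent $\chi$ as in the statement, and integrates by parts with the weight $V\chi$ itself rather than a smooth $g$. The point is that although $V$ alone is merely H\"older, the product $V\chi$ is Lipschitz with the sharp bound $|(V\chi)'|\lesssim q^{\delta}W$ on all of $\{|x|>\sigma\}$: on the transition layer $(V\chi)'=(\beta+1)q^{\delta}V$, and for $|x|>\sigma+q^{-\delta}$ one has $V'=\beta(|x|-\sigma)^{-1}V\le\beta q^{\delta}V$. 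Hence the boundary term $\re\int(V\chi)'u'\bar u$ is bounded by $q^{\delta}\int W|uu'|$, which feeds back into the very quantity being estimated and is then absorbed using \eqref{e:elem}. This self-referential step is what your $q$-independent $g$ cannot replicate, and it is precisely where the choice $\delta=1/(\beta+2)$ enters to balance the resulting exponent $2-2\delta\beta-2\delta$ against $2\delta$.
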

We then prove \eqref{e:2b2}.

\begin{proof}[Proof of Lemma \ref{muest}] Fix $\tau \in (\sigma,\pi)$ and a continuous and piecewise linear $b$ such that
\[
 b'(x) =
 \begin{cases}
  1, \qquad & |x| \in [0,\sigma),\\
  q^\delta  , \qquad &  |x| \in (\sigma, \sigma + q^{-\delta}),\\
  1, \qquad & |x| \in (\sigma + q^{-\delta}, \tau), \\
  -M, \qquad & |x| \in (\tau, \pi),
 \end{cases}
\]
with $M>0$ chosen such that $b$ is $2\pi$ periodic. We assume $q_0$ is large enough that $\sigma + q^{-\delta}<\tau$ when $q \ge q_0$.

With $F$ as in \eqref{e:fdef}, we have
\[\begin{split}
 (bF)' &= b' |u'|^2 + E b'|u|^2 + 2 b \re u'' \bar u' + 2 E \re u \bar u' \\
 &= b' |u'|^2 + E b'|u|^2 - 2 b \re f \bar u'  + 2 qb \re i W u \bar u'.
\end{split}\]
Using
\[
 \int (bF)' = 0,
\]
gives
\[
 \int b' |u'|^2 + E b' |u|^2 \le 2 \int b| f  u'| + 2 q \int  b W |u u'|.
\]
Add a multiple of \eqref{e:wu} and \eqref{e:u'eps} to both sides, and apply \eqref{e:elem}, to get
the desired statement. 
\end{proof}

\begin{proof}[Proof of Lemma \ref{fuwfu}]
To estimate the last term of Lemma \ref{muest} we use \eqref{e:wu}:
\begin{equation}\label{e:wuu'1}
\left( \int W |uu'|\right)^2 \le \left(\int W |u|^2\right) \left(\int W |u'|^2\right) \lesssim q^{-1} \left(\int |fu|\right) \left( \int \polydamp |u'|^2\right).
\end{equation}
We write
\[
 \int \polydamp|u'|^2  =  \int \polydamp(1-\chi)|u'|^2 +  \int \polydamp\chi|u'|^2.
\]

For the first term use the fact that  $ \polydamp(1-\chi) $ is supported on $[\sigma,\sigma+q^{-\delta}]$ where it obeys
\[
 \polydamp(1-\chi) \le q^{-\delta\beta} = q^{-\delta \beta -\delta} \mu.
\]
To handle the $\polydamp\chi$ term we integrate by parts.
\[
 \int \polydamp\chi |u'|^2 =  - \re \int (\polydamp \chi)' u' \bar u - \re \int \polydamp \chi u'' \bar u.
\]
For the first resulting term we use
\[
 |(\polydamp \chi)'| \lesssim q^\delta W,
\]
and for the other \eqref{e:ueq} and \eqref{e:wu} give
\[
\begin{split}
- \re \int \polydamp \chi u'' \bar u &=  E \int \polydamp \chi |u|^2 + \re \int \polydamp \chi f \bar u \lesssim Eq^{-1} \int|fu| +  \int |W \chi f u|.
\end{split}
 \]
Putting everything into \eqref{e:wuu'1} gives
\[\begin{split}
 \left( \int W |uu'|\right)^2  \lesssim & \ q^{-1-\delta \beta -  \delta} \left(\int |f|^2\right)^{1/2} \left(\int |u|^2\right)^{1/2}   \int \mu |u'|^2 \\
 & + q^{-1+\delta} \left(\int |f|^2\right)^{1/2} \left(\int |u|^2\right)^{1/2}  \int W |uu'| \\
 & + E q^{-2} \left(\int |f|^2\right)\left(\int |u|^2\right) + q^{-1} \left(\int |fu|\right) \left( \int |W \chi fu|\right),
\end{split}\]
which, by \eqref{e:elem}, implies
\[\begin{split}
 \left( \int W |uu'|\right)^2  \lesssim & \ q^{-1-\delta \beta -  \delta}  \left(\int |f|^2\right)^{1/2} \left(\int |u|^2\right)^{1/2}   \int \mu |u'|^2 \\
 & + (E q^{-2} + q^{-2+2\delta}) \left(\int |f|^2\right)\left(\int |u|^2\right) + q^{-1} \left(\int |fu|\right) \left( \int |W \chi fu|\right).
\end{split}\]
Inserting into Lemma \ref{muest} gives
\[\begin{split}
 \int \mu |u'|^2 + E \mu &|u|^2 \lesssim  \int |f|^2 + q^{(1-\delta \beta -  \delta)/2} \left(\int |f|^2\right)^{1/4} \left(\int |u|^2\right)^{1/4}   \left(\int \mu |u'|^2\right)^{1/2} \\
 &+ (E^{1/2} + q^{\delta}) \left(\int |f|^2\right)^{1/2}\left(\int |u|^2\right)^{1/2} + q^{1/2} \left(\int |fu|\right)^{1/2} \left( \int |W \chi fu|\right)^{1/2},
\end{split}\]
and using again \eqref{e:elem} we obtain 
\[
  \int \mu |u'|^2 + E \mu |u|^2 \lesssim (1 + E^{-1}q^{2-2\delta \beta -  2\delta}  + E^{-1}q^{2\delta}) \int |f|^2 +  q^{1/2} \left(\int |fu|\right)^{1/2} \left( \int |W \chi fu|\right)^{1/2}.
\]
We choose $\delta = 1/(\beta + 2)$ to optimize the dependence on $q$, giving Lemma \ref{fuwfu}.
\end{proof}

\begin{proof}[Proof of \eqref{e:2b2}]
Let $\eta_0=\delta$ and let $N \in \mathbb{N}$ to be chosen later and $\eta_j \in (0,\delta)$ with $\eta_{j-1}>\eta_{j}$ for $j = 1, \dots N$ also to be chosen later. By linearity, we may consider separately the $N+3$ cases 
\begin{enumerate}
	\item $|x| \le \sigma$ on $\supp f$, 
	\item $|x| \in [\sigma, \sigma + q^{-\delta}]$ on $\supp f$, 
	\item $|x| \in[\sigma + q^{-\eta_j}, \sigma + q^{-\eta_{j+1}}]$ on $\supp f$, for $j = 0, \dots, N-1$, 
	\item$|x| \ge \sigma + q^{-\eta_N}$ on $\supp f$.
\end{enumerate}

1. In the case that $|x| \le \sigma$ on $\supp f$, the last term in Lemma \ref{fuwfu} vanishes and we have \eqref{e:2b2}.

2. In the case that $|x| \in [\sigma, \sigma + q^{-\delta}]$ on $\supp f$,   we use the fact that $\mu = q^\delta$ there to write
\[
 \int |fu| \le q^{-\delta/2}\left(\int |f|^2\right)^{1/2} \left(\int \mu |u|^2\right)^{1/2},
\]
and, moreover, since $W \le q^{-\beta\delta}$ there, by \eqref{e:wu} we have
\begin{equation}\label{e:wchifu1}\begin{split}
    \int  | W \chi f u| & \le q^{-\beta\delta/2}\int |f W^{1/2} u| \lesssim q^{-1/2 - \beta \delta/2} \left(\int |f|^2\right)^{1/2} \left(\int |fu|\right)^{1/2}  \\
    &\le q^{-1/2 - \beta \delta/2 - \delta/4}  \left(\int |f|^2\right)^{3/4} \left(\int \mu |u|^2 \right)^{1/4}.
  \end{split}
    \end{equation}
Inserted into Lemma \ref{fuwfu}, these give
\[\begin{split}
  \int \mu |u'|^2 + E \mu |u|^2 \lesssim (1+E^{-1} q^{2\delta}) \int |f|^2 + q^{1/4 - \beta \delta/4 - 3\delta/8} \left(\int |f|^2\right)^{5/8} \left(\int \mu |u|^2 \right)^{3/8},
\end{split}\]
which, by \eqref{e:elem}, implies
\[
   \int \mu |u'|^2 + E \mu |u|^2 \lesssim (1+E^{-1}q^{2\delta} + E^{-3/5}q^{2/5 - 2 \beta\delta/5 - 3 \delta/5}) \int |f|^2 ,
\]
which implies \eqref{e:2b2}.

3. In the case that $|x| \in[\sigma + q^{-\eta_j}, \sigma + q^{-\eta_{j+1}}]$, since $W \ge q^{-\eta_j\beta}$ there and by \eqref{e:wu} we have,
\[
 \int |fu| \lesssim q^{\eta_j \beta /2} \int |f W^{1/2} u| \le q^{\eta_j \beta/2 - 1/2}\left(\int |f|^2\right)^{1/2} \left(\int |fu|\right)^{1/2},
\]
or
\[
 \int |fu| \lesssim q^{-1 + \eta_j \beta } \int |f|^2,
\]
which also gives, as in \eqref{e:wchifu1},
\[
\int  |W\chi f u| \lesssim q^{-1/2 - \beta \eta_{j+1}/2}  \left(\int |f|^2\right)^{1/2} \left(\int |fu|\right)^{1/2} \lesssim q^{-1 - \beta \eta_{j+1}/2+ \eta_j \beta/2} \int |f|^2.
\]
Inserting these into Lemma \ref{fuwfu} gives
\[\begin{split}
  \int \mu |u'|^2 + E \mu |u|^2 \lesssim (1+ E^{-1}q^{2\delta}+ q^{-1/2+3 \eta_j \beta/4 - \beta\eta_{j+1}/4}) \int |f|^2. 
\end{split}\]

4. In the case that $|x| \ge \sigma + q^{-\eta_N}$ on $\supp f$ we estimate similarly:
\[
 \int |fu| \lesssim q^{\beta \eta_N/2} \int |f W^{1/2} u| \le q^{ \beta \eta_N /2 - 1/2}\left(\int |f|^2\right)^{1/2} \left(\int |fu|\right)^{1/2},
\]
or
\[
 \int |fu| \lesssim q^{-1+\beta \eta_N} \int |f|^2,
\]
which gives
\[
     \int  |\chi W f u| \lesssim q^{-1/2}  \left(\int |f|^2\right)^{1/2} \left(\int |fu|\right)^{1/2} \lesssim q^{-1+ \beta \eta_N/2} \int |f|^2.
\]
Inserted into Lemma \ref{fuwfu}, these give
\[\begin{split}
  \int \mu |u'|^2 + E \mu |u|^2 \lesssim (1+E^{-1}q^{2\delta} + q^{-1/2 + 3  \beta \eta_N /4}) \int |f|^2.
\end{split}\]
These are optimized when 
\[
\begin{cases} 
3 \eta_j = 4 \eta_{j+1} - \eta_{j+2}, \quad j=0, 1,\ldots, N-2 \\
3 \eta_{N-1} = 4 \eta_{N}.
\end{cases}
\] 
Recalling that $\eta_0 = \delta$ this is solved by 
$$
\eta_k = \delta \frac{3^{N+1}-3^k}{3^{N+1}-1},
$$
this gives \eqref{e:2b2} in all cases 3 and 4 as long as $N$ is chosen large enough that $\beta \le 6(3^{N+1}-1)$.
\end{proof}

\subsection*{Acknowledgments}

The authors are grateful to Jared Wunsch and Matthieu L\'eautaud for helpful comments and suggestions. KD was partially
supported by NSF Grant DMS-1708511. PK was supported in part by the National Science Foundation grant RTG: Analysis on manifolds at Northwestern University.

\bibliographystyle{alpha}
\bibliography{mybib}

\end{document}